\newtheorem{theorem}{Theorem}
\newtheorem{proposition}{Proposition}
\newtheorem{lemma}{Lemma}
\DeclareMathOperator{\Pic}{Pic}
\DeclareMathOperator{\Kar}{char}
\newtheorem*{subject}{2000 Mathematics Subject Classification}
\newtheorem*{keywords}{Keywords}
\author{Marc Coppens\footnote{KU  Leuven, Technologiecampus Geel, Departement Elektrotechniek (ESAT),
Kleinhoefstraat 4, B-2440 Geel, Belgium; email: marc.coppens@kuleuven.be.}}
\title{The scrollar invariants of $k$-gonal curves having a nodal model on a smooth quadric having its nodes on few lines }
\date{}
\begin{document}
\maketitle \noindent

\begin{abstract}
We determine the scrollar invariants of the normalization $C$ of a nodal curve $\Gamma$ of type $(k,a)$ on a smooth quadric $\mathbb{P}^1 \times \mathbb{P}^1$ associated to the $g^1_k$ defined by the pencil of lines of type $(0,1)$ in case all nodes are contained in at most $k-1$ lines of type $(1,0)$.
This result is very much related to results obtained in \cite{ref1}, but the proof follows directly from an easy lemma not mentioned in \cite{ref1}.
Also the main theorem in \cite{ref1} is a consequence of that lemma making the arguments much shorter.
\end{abstract}

\begin{subject}
14H51
\end{subject}

\begin{keywords}
gonality, scrollar invariants, curves on quadrics
\end{keywords}

\section{Introduction}\label{section1}

Let $C$ be an irreducible smooth complete curve of genus $g$ defined over an algebraically closed field $K$ and having a complete base point free linear system $g^1_k$ (we fix such $g^1_k$ on $C$ from now on).
The linear system $g^1_k$ has scrollar invariants $0 \leq e_1 \leq e_2 \leq ... \leq e_{k-1}$: the integer $e_i+2$ is the smallest integer $n$ such that $h^0(C,ng^1_k)-h^0(C,(n-1)g^1_k)>i$.
The Riemann-Roch Theorem implies this inequality is equivalent to $h^0(C,\omega _C-(n-1)g^1_k)-h^0(C,\omega _C-ng^1_k)<k-i$.
Those scrollar invariants satisfy the equality $e_1+ ... +e_{k-1}=g-k+1$.

Let $M=\mathbb{P}^1 \times \mathbb{P}^1$ and let $\Gamma$ be an irreducible divisor of type $(k,a)$ on $M$ (here $k,a>0$).
Let $C$ be the normalization of $\Gamma$.
The pencil of lines of type $(0,1)$ induce a base point free linear system $g^1_k$ on $C$.
In our main result we determine the scrollar invariants of $g^1_k$ in case $\Gamma$ is a nodal curve and all nodes are contained on at most $k-1$ lines of the  pencil of lines of type $(1,0)$ on $M$.

This main result is strongly inspired by the results obtained in \cite{ref1}.
In part (i) of the proof of the main theorem in \cite{ref1} our main result is obtained in the case both the choices of the lines of type $(1,0)$ and the position of the nodes of $\Gamma$ on those lines are general.
Our main result implies this generality condition is not necessary.
Moreover the proof of that part of the main theorem in \cite{ref1} at the end consists of a reference to two remarks and part of the proof of a claim in Lemma 1 in \cite{ref1}.
The proof of that claim makes intensively use of the method d'Horace.
It seems to me that in order to finish the arguments one needs the result of Lemma \ref{lemma3} in our paper.
I do not see how this follows from the comments given in \cite{ref1}.
The proof of Lemma \ref{lemma3} is very elementary and easy and does not need the method d'Horace.
This Lemma \ref{lemma3} is crucial for the proof of our main results without making use of the method d'Horace.

The main theorem in \cite{ref1} is Theorem \ref{theorem2} in our paper.
Determining which sequences $0 \leq e_1 \leq ... \leq e_{k-1}$ do occur as scrollar invariants of smooth complete curves of genus $g$ is an important problem and the main theorem of \cite{ref1} gives a very interesting result on this problem.
Therefore it seems worthwhile to have arguments how this follows from Lemma \ref{lemma3} avoiding more complicated arguments as in \cite{ref1}.
Also in Theorem \ref{theorem2} the genus bound on $g(k,e)$ is a little bit better than the one obtained in \cite{ref1}.
It is important to mention that the idea for obtaining Theorem \ref{theorem2} using nodal curves on a smooth quadric is completely coming from \cite{ref1}.

\section{Generalities}\label{section2}

Let $D$ be a divisor on a smooth complete variety $X$, then we denote $\mathcal{O}(D)$ for the associated invertible sheaf on $X$.
It defines an element $c \in \Pic (X)$.
If the space of global sections $\Gamma (C,\mathcal{O}(D))$ is non-zero we write $\vert D \vert$ to denote the associated complete linear system.
We also write $\vert c \vert$ or $\vert \mathcal{O}(D) \vert$ to denote this linear system.
In case $X$ is a curve, $\deg (D)=d$ and $\dim \vert c \vert =r$ then we say $\vert c \vert$ is a $g^r_d$ on $X$.
More general, a $g^r_d$ on a curve $X$ can be a linear subspace of dimension $r$ of some complete linear system of degree $d$ on $X$ (so it need not be complete).
If $X$ is a curve and $\vert D \vert$ is a $g^r_d$ we also write $\vert mg^r_d \vert$ to denote $\vert mD \vert$.

For any smooth complete variety $X$ we write $\omega _X$ to denote the canonical line bundle on $X$.
In case $\Omega _X$ is a canonical divisor (it need not be effective) and $D$ is a divisor on $X$ defining $c \in \Pic (X)$ then we also write $\vert \omega _X -mD \vert$ (or $\vert \omega _X -mc \vert$) to denote $\vert \Omega _X-mD \vert$.
In case $X$ is a curve and $\vert D \vert$ is a $g^r_d$ on $X$ then we also write $\vert \omega _X-mg^r_d \vert$.

Now let $M=\mathbb{P}^1 \times \mathbb{P}^1$.
There are two projections $pr_i : M \rightarrow \mathbb{P}^1$ ($i\in \{ 1,2 \}$).
For $P \in \mathbb{P}^1$ the fiber $pr_i^{-1}(P)$ is an effective divisor on $M$.
Refering to the classical imbedding $M \subset \mathbb{P}^3$ as a smooth quadric, we call it a line.
The associated element of $\Pic (M)$ is denoted by $(1,0)$ in case $i=1$ and $(0,1)$ in case $i=2$.
It is well-known that $\Pic (M)=\mathbb{Z}.(1,0) \oplus \mathbb{Z}.(0,1)$.
For $a,b \in \mathbb{Z}$ we write $(a,b)$ to denote $a.(1,0)+b.(0,1)$.
We also write $\mathcal{O}(a,b)$ to denote an invertible sheaf associated to $(a,b)\in \Pic (M)$.
The intersection number $(a,b).(a',b')$ is given by $ab'+ba'$.
One has $\vert (a,b) \vert \neq \emptyset$  if and only if $a \geq 0$ and $b \geq 0$ and in that case $\dim (\vert (a,b) \vert)=(a+1)(b+1)-1=ab+a+b$ and an element of $\vert (a,b) \vert$ is called a curve of type $(a,b)$.
One has $\omega _M= \mathcal{O} (-2,-2)$.

An important ingredient in the proofs of this paper is the following well-known lemma.

\begin{lemma}\label{lemma1}
If $a,b\geq 0$ then $H^1(\mathcal{O}(a,b))=0$.
\end{lemma}

\begin{proof}
If one uses \cite{ref2}, Chapter III, Exercise 5.6 (a)(2) then it follows from Serre duality (especially \cite{ref2}, Chapter III, Corollary 7.7).

It can also be proved directly by means  of induction starting with $H^1(\mathcal{O} _M)=0$ ($M$ is a rational surface) and using some exact cohomology sequences.

A short argument is as follows. By explicitly writing down above the two standard affine open subsets $\mathbb{A} ^1$ of $\mathbb{P} ^1$ (one omitting $0$ and one omitting $\infty$) one finds
\[
(pr_2)_*(\mathcal{O} (a,b))=\mathcal{O}(b) \oplus ... \oplus \mathcal{O}(b) \text{   ($a$ terms)}
\]
(here $\mathcal{O} (b)$ is an invertible sheaf of degree $b$ on $\mathbb{P}^1$).
From \cite{ref2}, Chapter V, Lemma 2.4 one knows $H^1(\mathcal{O}(a,b))=H^1((pr_2)_*(\mathcal{O}(a,b)))$.
On $\mathbb{P}^1$ one has $H^1(\mathcal{O}(b))=H^0(\mathcal{O}(-2-b))=0$ for $b \geq 0$.
\end{proof}

Let $Z$ be a 0-dimensional subscheme of $M$.
The set of divisors $\Gamma$ of $\vert (a,b) \vert$ containing $Z$ is a linear subspace of $\vert (a,b) \vert$ denoted by $\vert (a,b) -Z \vert$.
In case $Z_1$ and $Z_2$ are two disjoint 0-dimensional subschemes of $M$ we write $Z_1+Z_2$ to denote their union.

Now, let $\Gamma$ be an irreducible nodal curve of type $(a,b)$ on $M$ having nodes at $P_1, ... , P_m$ and let $n : C \rightarrow \Gamma$ be the normalization of  $\Gamma$.
Write $n^{-1}(P_i)=\{ P_{i1}, P_{i2} \}$.

\begin{lemma}\label{lemma2}
Using the notations and the situation mentioned above, we have
\[
\vert \omega _C \vert = \{ n^{-1} (D.\Gamma) - \sum^m_{i=1} (P_{i1}+P_{i2}) : D \in \vert (a-2,b-2)-P_1- ... - P_m \vert \} \text{.}
\]
In particular
\[
\dim ( \vert \omega _C \vert )= \dim ( \vert (a-2,b-2) -P_1- ... - P_m \vert ) \text{.}
\]
\end{lemma}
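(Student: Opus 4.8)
The plan is to identify the global canonical differentials on $C$ with curves of type $(a-2,b-2)$ through the nodes, using the adjunction formula on $M$ together with the classical residue description of the dualizing sheaf $\omega_\Gamma$ at a node. First I would invoke adjunction for the Cartier divisor $\Gamma$ on the smooth surface $M$: since $\omega_M=\mathcal{O}(-2,-2)$ and $\Gamma$ has type $(a,b)$, the dualizing sheaf of the Gorenstein curve $\Gamma$ is $\omega_\Gamma \cong \mathcal{O}_\Gamma(a-2,b-2)$. Restricting the exact sequence $0\to\mathcal{O}(-2,-2)\to\mathcal{O}(a-2,b-2)\to\mathcal{O}_\Gamma(a-2,b-2)\to 0$ and using $H^0(\mathcal{O}(-2,-2))=0$ together with $H^1(\mathcal{O}(-2,-2))\cong H^1(\mathcal{O}(0,0))^{\vee}=0$ (Serre duality and Lemma \ref{lemma1}), the restriction map $H^0(\mathcal{O}(a-2,b-2))\to H^0(\omega_\Gamma)$ is an isomorphism. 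Thus a curve $D$ of type $(a-2,b-2)$ corresponds bijectively to a section $s_D$ of $\omega_\Gamma$ with $\operatorname{div}(s_D)=D\cdot\Gamma$ on $\Gamma$, and $s_D$ vanishes at a node $P_i$ exactly when $D$ passes through $P_i$.

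Next I would carry out the local computation at a node. Writing $\Gamma$ locally as $\{xy=0\}$, the dualizing sheaf $\omega_\Gamma$ is generated near $P_i$ by the differential $dx/x=-dy/y$, whose pullback to the two branches of $\Gamma$ has a simple pole at each of $P_{i1}$ and $P_{i2}$. Hence $n^*\omega_\Gamma\cong\omega_C\bigl(\sum_{i=1}^m(P_{i1}+P_{i2})\bigr)$. For a section $s$ of $\omega_\Gamma$, the pullback $n^*s$ is therefore a meromorphic differential with at worst simple poles at the $P_{ij}$, and its divisor as a section of $\omega_C$ equals $n^{-1}(\operatorname{div} s)-\sum_{i}(P_{i1}+P_{i2})=n^{-1}(D\cdot\Gamma)-\sum_{i}(P_{i1}+P_{i2})$. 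The same local computation shows that $n^*s$ is holomorphic at $P_{i1}$ and $P_{i2}$, i.e. lies in $H^0(\omega_C)$, precisely when $s$ vanishes at $P_i$, that is, when $D$ contains $P_i$. Consequently
\[
H^0(\omega_C)=\{\, n^*s : s\in H^0(\omega_\Gamma)\text{ with } s(P_i)=0 \text{ for all } i \,\},
\]
and via the isomorphism of the first paragraph this space is identified with the sections cutting out $\vert(a-2,b-2)-P_1-\cdots-P_m\vert$.

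Passing to divisors, each $D\in\vert(a-2,b-2)-P_1-\cdots-P_m\vert$ yields the effective canonical divisor $n^{-1}(D\cdot\Gamma)-\sum_{i}(P_{i1}+P_{i2})$, and since the $n^*s$ span $H^0(\omega_C)$ every element of $\vert\omega_C\vert$ arises this way; this is exactly the asserted description. The dimension statement is then immediate, because the isomorphism $H^0(\mathcal{O}(a-2,b-2))\cong H^0(\omega_\Gamma)$ carries the subspace defining $\vert(a-2,b-2)-P_1-\cdots-P_m\vert$ isomorphically onto $H^0(\omega_C)$; one may note in passing that the identity $h^0(\omega_C)=g=p_a(\Gamma)-m$ then forces the $m$ nodes to impose independent conditions on $\vert(a-2,b-2)\vert$, so that no genericity hypothesis is needed. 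I expect the one genuinely delicate point to be the node-local identification $n^*\omega_\Gamma\cong\omega_C\bigl(\sum_i(P_{i1}+P_{i2})\bigr)$ together with the residue/vanishing dictionary it provides; everything else reduces to the cohomology vanishing of Lemma \ref{lemma1} and to linear algebra.
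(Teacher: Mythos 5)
Your proof is correct and is the standard adjunction/conductor argument: adjunction gives $\omega_\Gamma\cong\mathcal{O}_\Gamma(a-2,b-2)$, the vanishing $H^1(\mathcal{O}(-2,-2))=0$ identifies $H^0(\omega_\Gamma)$ with curves of type $(a-2,b-2)$, and the local model $xy=0$ with generator $dx/x$ shows a section pulls back to a holomorphic differential exactly when the adjoint curve passes through the node. The paper itself offers no argument here, only citations to Remark 2 of \cite{ref1} and Lemma 2 of \cite{ref3}, and your write-up is precisely the classical adjoint-curve proof those references rely on, so it matches the intended approach.
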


\begin{proof}
See the first part of Remark 2 in \cite{ref1}.
It also follows from the proof of Lemma 2 in \cite{ref3}.
\end{proof}

\section{Proofs}\label{section3}

All results in this paper follow from the next elementary and easy to prove lemma.
In this paper, in order to reprove the main theorem of \cite{ref1}, it replaces the intensive use of the method d'Horace.

\begin{lemma}\label{lemma3}
Fix $a \in \mathbb{Z}_{\geq 0}$.
On $M = \mathbb{P}^1 \times \mathbb{P}^1$ choose $k+1$ different lines $L_1, ... ,L_{k+1}$ of type $(1,0)$ and for $1 \leq i \leq k+1$ choose an effective divisor $D_i$ of degree $y_i$ on $L_i$ with $a \geq y_1 \geq y_2 \geq ... \geq y_{k+1} \geq 0$ (so some  divisors $D_i$ are allowed to be 0).
Then
\[
\dim ( \vert (k,a) -D_1 - ... - D_{k+1} \vert )=ka+k+a-y_1 - ... - y_{k+1} \text { .}
\]
\end{lemma}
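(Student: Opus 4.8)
The plan is to rephrase the statement as an assertion that $Z := D_1 + \dots + D_{k+1}$ imposes independent conditions, and then to verify this by restricting sections to the lines $L_i$, where a Vandermonde/interpolation argument trivializes the whole question. Recall from Section~\ref{section2} that $\dim(\vert (k,a)\vert) = (k+1)(a+1)-1 = ka+k+a$, and that $\dim(\vert (k,a)-Z\vert) = \dim V_Z - 1$, where $V_Z = H^0(\mathcal{O}(k,a)\otimes I_Z)$ is the space of sections of $\mathcal{O}(k,a)$ vanishing on $Z$. Since the $L_i$ are distinct lines of type $(1,0)$ they are pairwise disjoint, so $\mathcal{O}_Z \cong \bigoplus_i \mathcal{O}_{D_i}$ has length $y_1+\dots+y_{k+1}$. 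Hence it suffices to prove that the evaluation map $H^0(\mathcal{O}(k,a)) \to H^0(\mathcal{O}_Z)$ is surjective, for then $\dim V_Z = (k+1)(a+1) - (y_1+\dots+y_{k+1})$, which is exactly the claimed value.

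To organize the argument I would factor this evaluation through restriction to the full lines:
\[
H^0(\mathcal{O}(k,a)) \xrightarrow{\ \rho\ } \bigoplus_{i=1}^{k+1} H^0\!\big(L_i,\ \mathcal{O}(k,a)\vert_{L_i}\big) \longrightarrow \bigoplus_{i=1}^{k+1} H^0\!\big(D_i,\ \mathcal{O}_{D_i}\big),
\]
where each $\mathcal{O}(k,a)\vert_{L_i} \cong \mathcal{O}_{\mathbb{P}^1}(a)$. The right-hand arrow is surjective in every factor: $D_i$ is an effective divisor of degree $y_i \le a$ on $L_i \cong \mathbb{P}^1$, and $H^0(\mathcal{O}_{\mathbb{P}^1}(a)) \to H^0(\mathcal{O}_{D_i})$ is onto because $H^1(\mathcal{O}_{\mathbb{P}^1}(a-y_i))=0$. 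This is precisely where the hypothesis $a \ge y_1 \ge \dots \ge y_{k+1}$ enters, guaranteeing $a-y_i \ge 0$ for all $i$.

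The crux — the \emph{easy lemma} doing the real work — is that $\rho$ is an isomorphism. Identifying each $L_i$ with the fibre over a point $p_i = [s_i:t_i]$ of the first factor, I would write a section of $\mathcal{O}(k,a)$ as $F = \sum_{j=0}^{k} s^{\,k-j}t^{\,j}\,G_j(u,v)$ with each $G_j$ a form of degree $a$ in the second coordinates; restriction to $L_i$ sends $F$ to $\sum_j s_i^{\,k-j}t_i^{\,j}\,G_j$. Fixing one of the $a+1$ degree-$a$ monomials in $(u,v)$, recovering the corresponding coefficients of $G_0,\dots,G_k$ from their values at the $k+1$ \emph{distinct} points $p_1,\dots,p_{k+1}$ is Lagrange interpolation, governed by a $(k+1)\times(k+1)$ Vandermonde matrix that is invertible exactly because the $L_i$ are distinct. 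Since source and target of $\rho$ both have dimension $(k+1)(a+1)$ and $\rho$ is bijective on each of these $a+1$ monomial blocks, $\rho$ is an isomorphism. Composing with the surjection of the previous paragraph gives surjectivity of $H^0(\mathcal{O}(k,a)) \to H^0(\mathcal{O}_Z)$, and the dimension count above concludes.

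The only real obstacle I anticipate is phrasing the Vandermonde step cleanly rather than the mathematics itself. One could instead try induction on $k$, peeling off a line via $0 \to \mathcal{O}(k-1,a) \to \mathcal{O}(k,a) \to \mathcal{O}_{L_{k+1}}(a) \to 0$, but that route forces control of $H^1(\mathcal{O}(k-1,a)\otimes I_{Z'})$ for the ideal sheaf of the remaining points $Z' = D_1+\dots+D_k$, which is more delicate; the direct restriction argument above sidesteps it entirely and is what makes the proof elementary, using only Lemma~\ref{lemma1} on $\mathbb{P}^1$ and distinctness of the lines.
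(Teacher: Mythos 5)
Your proof is correct, and it takes a genuinely different route from the paper. The paper argues by a double induction (on $k$ and on the number $k'$ of nonzero $D_i$), peeling off one line at a time via the exact sequence $0 \to \mathcal{O}(k-1,a) \to \mathcal{O}(k,a) \to \mathcal{O}_{L_1}(a) \to 0$ together with Lemma \ref{lemma1}, and using a contradiction argument (a too-large $\dim \vert (k-1,a)-D_2-\dots-D_{k+1}\vert$ would violate the inductive hypothesis) to show the restricted system still cuts out the complete $\vert \mathcal{O}_{L_1}(a)\vert$ on $L_1$. You instead treat all $k+1$ lines at once: the restriction $\rho\colon H^0(\mathcal{O}(k,a)) \to \bigoplus_i H^0(\mathcal{O}(k,a)\vert_{L_i})$ is an isomorphism (your Vandermonde block decomposition is valid over any field, and injectivity also follows simply from $h^0(\mathcal{O}(-1,a))=0$ since both sides have dimension $(k+1)(a+1)$), and each $H^0(\mathcal{O}_{\mathbb{P}^1}(a)) \to H^0(\mathcal{O}_{D_i})$ is onto precisely because $y_i \le a$. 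This correctly isolates the two places where the hypotheses enter (at most $k+1$ lines; $y_i\le a$), handles non-reduced $D_i$ without extra care, and eliminates the induction entirely; what it costs is only the small bookkeeping of the interpolation step, whereas the paper's version stays entirely in the language of linear systems and imposed conditions at the price of a longer double induction. Your final dimension count $\dim\vert(k,a)-Z\vert = (k+1)(a+1) - \sum y_i - 1$ matches the claimed formula, and nonemptiness is automatic since $\sum y_i \le (k+1)a$.
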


\begin{proof}
In case $a=0$ there is nothing to prove, so we assume $a \geq 1$.

First assume $k=0$.
One has $\dim (\vert (0,a) \vert)=a$ and each element of $\vert (0,a) \vert$ is the sum of $a$ lines of type $(0,1)$.
The fact that this sum needs to contain the divisor $D_1$ on $L_1$ imposes $y_1$ independent conditions on $\vert (0,a) \vert$.
This proves the desired dimension claim in case $k=0$.

Take $k \geq 1$ and assume the dimension claim holds for all smaller values of $k$.
Let $k'= \max \{ i : y_i \neq 0 \}$ (hence $0 \leq k' \leq k+1$ with $k' = 0$ corresponding to $y_1 = 0$).
In case $k' = 0$ there is nothing to prove, so we can assume $k' \geq 1$.

First assume $k'=1$ and consider the exact sequence
\[
0 \rightarrow \mathcal{O}_M(k-1,a) \rightarrow \mathcal{O}_M (k,a) \rightarrow \mathcal{O}_{L_1}(a) \rightarrow 0 \text { .}
\]
From $h^1(\mathcal{O}_M(k-1,a))=0$ (see Lemma \ref{lemma1}) we find $\vert (k,a) \vert$ induces the complete linear system $\vert \mathcal{O}_{L_1}(a) \vert$ of degree $a$ on $L_1 \cong \mathbb{P}^1$.
We conclude again the condition $D_1 \subset \Gamma$ for $\Gamma \in \vert (k,a) \vert$ induces $y_1$ independent linear conditions on $\vert (k,a) \vert$, again proving the desired dimension claim in this case.

We are also going to use induction on $k'$.
So assume $k' \geq 2$ and the dimension claim holds for smaller values of $k'$ (for the linear system $\vert (k,a) \vert$).
Fix one more line $L_{k+2}$ of type $(1,0)$ and use the lines $L_2, L_3, ... , L_{k+1}, L_{k+2}$ and the divisor $D_{k+2}=0$ on $L_{k+2}$.
So we use $y_2 \geq y_3 \geq ... \geq y_{k+1} \geq y_{k+2}=0$.
For this situation the value of $k'$ drops by one and from the induction hypothesis we obtain
\[
\dim ( \vert (k,a)(D_2 - ... - D_{k+1} ) \vert ) = ak+a+k-y_2 - ... - y_{k+1} \text { .}
\]
Assume $\vert (k,a) (-D_2 - ... - D_{k+1}) \vert$ does not induce the complete linear system $\vert \mathcal{O}_{L_1} (a) \vert$ on $L_1$.
Then the condition $L_1 \subset \Gamma$ for $\Gamma \in \vert (k,a)(-D_2 - ... -D_{k+1} ) \vert$ induces at most $a$ independent linear conditions on $\vert (k,a)(-D_2 - ... -D_{k+1}) \vert$.
Such elements are  the sum of $L_1$ and a divisor in $\vert (k-1,a)(-D_2 - ... - D_{k+1} ) \vert$, so it implies
\[
\dim ( \vert (k-1,a) (-D_2 - ... -D_{k+1})\vert )=a(k-1)+a+(k-1)-y_2 - ... -y_{k+1}+1 \text { .}
\]
This contadicts the induction hypothesis on $k$.
Therefore $\vert (k,a) (-D_2 - ... - D_{k+1}) \vert$ induces the complete linear system $\vert \mathcal{O}_{L_1}(a) \vert$ on $L$ and since $y_1 \leq a$ it implies
\[
\dim (\vert (k,a)(-D_1 - ... - D_{k+1}) \vert ) =ak+a+k-y_1 - ... -y_{k+1} \text { .}
\]
\end{proof}

Now let $L_1, ..., L_{k-1}$ be different lines of type $(1,0)$ on $M$ and for $1 \leq i \leq k-1$ let $D_i$ be a  reduced divisor of degree $y_i$ on $L_i$ with $a \geq y_1 \geq ... \geq y_{k-1} \geq 0$ ($a$ being a fixed element of $\mathbb{Z}_{\geq 1}$).
Let $\Gamma$ be a nodal curve of type $(k,a)$ on $M$ having its set of nodes equal to $D_1 + ... + D_{k-1}$.
Let $n : C \rightarrow \Gamma$ be  the normalization of $\Gamma$.
The pencil of lines $\vert (0,1) \vert$ induces a $g^1_k$ on $C$.
In the next theorem we determine the scrollar invariants of this $g^1_k$.

As mentioned in the introduction this result is also obtained (under some generality assumptions) in \cite{ref1} (part (i) of the proof of the main theorem).
Despite the importance of the determination of those scrollar invariants, the arguments to obtain them are absent in \cite{ref1} except for some indications with no details.
We give a complete proof only refering to our Lemma \ref{lemma3}.

\begin{theorem}\label{theorem1}
Let $C$ and $g^1_k$ be as described above.
The scrollar invariants $e_i$ of $g^1_k$ are given by $e_i=a-y_i-2$.
\end{theorem}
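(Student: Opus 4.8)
The plan is to work entirely through the dual characterisation of the scrollar invariants recalled in the introduction: $e_i+2$ is the smallest integer $n$ for which $h^0(C,\omega_C-(n-1)g^1_k)-h^0(C,\omega_C-ng^1_k)<k-i$. Thus everything reduces to understanding the single function $n\mapsto h^0(C,\omega_C-ng^1_k)$, which I would first transport to $M$. Since $n$ general fibres $G_1,\dots,G_n$ of $g^1_k$ are cut out by $n$ general lines $\ell_1,\dots,\ell_n$ of type $(0,1)$, one has $h^0(\omega_C-ng^1_k)=h^0(\omega_C-G_1-\cdots-G_n)$, and I would feed this into Lemma \ref{lemma2}. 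A divisor of $\vert\omega_C\vert$ is $n^{-1}(B.\Gamma)-\sum_i(P_{i1}+P_{i2})$ for $B\in\vert (k-2,a-2)-D_1-\cdots-D_{k-1}\vert$; requiring it to contain the reduced divisor $n^{-1}(\ell_1.\Gamma)+\cdots+n^{-1}(\ell_n.\Gamma)$ forces $B$ to contain each $\ell_j$, because otherwise $B$ meets $\ell_j$ in only $(k-2,a-2).(0,1)=k-2<k$ points. Peeling off $\ell_1+\cdots+\ell_n$ leaves a residual curve of type $(k-2,a-2-n)$ through the same nodes, so
\[
h^0(C,\omega_C-ng^1_k)=\dim\bigl(\vert (k-2,a-2-n)-D_1-\cdots-D_{k-1}\vert\bigr)+1 .
\]

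Writing $\alpha=a-2-n$, the core computation is therefore $\dim\bigl(\vert (k-2,\alpha)-D_1-\cdots-D_{k-1}\vert\bigr)$, and this is exactly where Lemma \ref{lemma3} enters. When $\alpha\geq y_1$ the hypotheses of Lemma \ref{lemma3} (read with first coordinate $k-2$, hence with $k-1$ lines) hold verbatim and give the value directly. The subtle regime is $\alpha<y_1$: if $y_i>\alpha$, then any curve of type $(k-2,\alpha)$ through the reduced divisor $D_i$ of degree $y_i$ on $L_i$ must contain $L_i$ as a component, since a curve not containing $L_i$ meets it in only $(k-2,\alpha).(1,0)=\alpha<y_i$ points. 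Splitting off the $j$ fixed lines $L_i$ with $y_i>\alpha$ and applying Lemma \ref{lemma3} to the residual system of type $(k-2-j,\alpha)$ through the remaining $D_i$ (for which $\alpha\geq y_i$ now holds) yields, after bookkeeping, the uniform formula
\[
\dim\bigl(\vert (k-2,\alpha)-D_1-\cdots-D_{k-1}\vert\bigr)=(k-1)(\alpha+1)-1-\sum_{i=1}^{k-1}\min(y_i,\alpha+1),
\]
valid for all $\alpha\geq 0$ (and giving $-1$, an empty system, once $\alpha+1\leq y_{k-1}$).

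With this in hand the rest is an elementary difference computation. Setting $\delta(n)=h^0(\omega_C-(n-1)g^1_k)-h^0(\omega_C-ng^1_k)$ and noting $\alpha+2=a-n$, the only terms changing between consecutive $n$ are the $\min(y_i,\cdot)$, so
\[
\delta(n)=(k-1)-\#\{\,i:y_i\geq a-n\,\}.
\]
Then $\delta(n)<k-i$ is equivalent to $\#\{\,i':y_{i'}\geq a-n\,\}\geq i$, and since the $y_i$ are non-increasing this first occurs exactly when $y_i\geq a-n$, i.e.\ at $n=a-y_i$. Hence $e_i+2=a-y_i$, which is the asserted value $e_i=a-y_i-2$.

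I expect the genuine obstacle to be the second paragraph: organising the fixed-component reduction when $\alpha<y_1$, i.e.\ checking that peeling off the lines $L_i$ with $y_i>\alpha$ leaves precisely an instance to which Lemma \ref{lemma3} applies (the line count $k-2-j$ plus one must equal the number $k-1-j$ of remaining divisors, and $\alpha\geq y_{j+1}$ must hold), and confirming that the resulting pieces glue into the single closed formula above. The transport to $M$ in the first paragraph is routine once Lemma \ref{lemma2} is available, and the final extraction of the $e_i$ is purely combinatorial.
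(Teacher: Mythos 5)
Your proposal is correct and follows essentially the same route as the paper's proof: transport $h^0(\omega_C-ng^1_k)$ to a linear system $\vert(k-2,a-2-n)-D_1-\cdots-D_{k-1}\vert$ on $M$ via Lemma \ref{lemma2}, split off as fixed components the lines $L_i$ with $y_i>a-2-n$, apply Lemma \ref{lemma3} to the residual system, and extract the $e_i$ from the resulting difference function. The only differences are presentational: you package the dimension count into a single closed formula with $\min(y_i,\alpha+1)$ where the paper bookkeeps by grouping equal $y_i$'s via indices $s_0<s_1<\cdots<s_t$, and you spell out the transport step (forcing $B$ to contain the general lines $\ell_j$) that the paper takes as known.
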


\begin{proof}
Define $t \geq 1$ and $s_0=1 < s_1 < ... < s_t=k$ such that $y_{s_0} > y_{s_1} > ... > y_{s_{t-1}}$ with $\{ y_1, ..., y_{k-1} \} = \{ y_{s_0}, y_{s_1}, ..., y_{s_{t-1}} \}$ (so for $1 \leq i \leq t$ exactly $s_i - s_{i-1}$ integers $y_i$ are equal to $y_{s_{i-1}}$).
So we need to prove $e_1= ... = e_{s_1-1}=a-y_1-2$  (in case $t=1$ this describes all scrollar invariants); for $1 \leq l \leq t$ and $s_{l-1} \leq i <s_l$ one has $e_i = a - y_{s_{l-1}}-2$.
We are going to compute the function $f(n)=h^0(C,\omega _C - (n-1)g^1_k)-h^0(C,\omega_C -ng^1_k)$ for $n \in \mathbb{Z}_{\geq 1}$.

In case $m \leq a-2-y_1$, hence $a-2-m \geq y_1$, we have because of Lemma \ref{lemma3}
\[
\dim \vert \omega _C - mg^1_k \vert = \dim \vert (k-2,a-2-m) -D_1 - ... - D_{k-1} \vert = 
\]
\[
=(k-2)(a-2-m)+(k-2)+(a-2-m)- y_1 - ... - y_{k-1} \text { .}
\]
This implies $f(m)=k-1$.

Take $1 \leq l \leq t$ and $a-2-y_{s_{l-1}}<m \leq a-2-y_{s_l}$ (we take $y_{s_t}=0$), hence $y_{s_l} \leq a-2-m <y_{s_{l-1}}$.
For $1 \leq i \leq s_l-1$ and $\Gamma \in \vert (k-2,a-2-m)(-D_1 - ... -D_{k-1}) \vert$ we need $\Gamma \cap L_i$ contains $D_i$.
However $(k-2,a-2-m).L_i=a-2-m< y_{s_{l-1}} \leq y_i$ therefore $\Gamma$ has to contain $L_i$.
This implies
\[
\vert (k-2,a-2-m)-D_1 - ... - D_{k-1} \vert= 
\]
\[ = \vert (k-1-s_l, a-2-m) -D_{s_{l-1}} - ... - D_{k-1} \vert +L_1+ ... + L_{s_l -1} \text { .}
\]
Since $\dim ( \vert \omega _C - mg^1_k \vert )= \dim ( \vert (k-2,a-2-m) -D_1 - ... - D_{k-1} \vert )$ it follows
\[
\dim ( \vert \omega _C -mg^1_k \vert ) = \dim ( \vert (k-1-s_l,a-2-m) -D_{s_l} - ... - D_{k-1} \vert \text { .}
\]
Since $a-2-m \geq y_{s_l}$ we can use Lemma \ref{lemma3} and conclude
\[
\dim \vert \omega _C-mg^1_k \vert = (k-1-s_l)(a-2-m)+(k-1-s_l)+(a-2-m)-y_{s_l} - ... - y_{k-1} \text { .}
\]
This implies
\[
f(a-1-y_{s_{l-1}})= \dim (\vert \omega_C - (a-2-y_{s_{l-1}})g^1_k \vert) - \dim (\vert \omega_C - (a-1-y_{s_{l-1}})g^1_k \vert )=
\]
\[
= (s_l - s_{l-1})y_{s_{l-1}}+ (k-s_{l-1})-(s_l -s_{l-1})y_{s_{l-1}} = k-s_{l-1} \text { .}
\]
For $a-y_{s_{l-1}} \leq m \leq a-2-y_{s_l}$ (this case does not occur if $y_{s_l}=y_{s_{l-1}} -1$) one has $f(m) = k-s_l$.
In particular we find $f(m)=0$ for $m \geq a-y_{s_{t-1}}$.

As mentioned in the introduction, $e_i+2$ is the minimal integer $n$ such that $f(n)<k-i$.
Since $f(n)=k-1$ if and only if $n\leq a-1-y_1$ and , in case $t>1$, $f(n)=k-s_1$ if and only if $a-y_1 \leq n \leq a-1-y_{s_1}$ we obtain $e_1 = ... = e_{s_1-1} = a-y_1-2$.
In case $t=1$ we obtain $e_1 = ... = e_{k-1}=a-y_1-1$.
For $1 \leq l \leq t-1$ we have $f(a-1-y_{s_{l-1}})=k-s_{l-1}$ while $f(a-y_{s_{l-1}})=k-s_l$.
This implies $e_i=a-y_{s_{l-1}}-2$ for $s_{l-1} \leq i < s_l$.
We obtainen $f(n)=0$ if and only if $n\geq a - y_{s_{t-1}}$ while $f(a-y_{s_{t-1}}-1)=k-s_{t-1}$.
This implies $e_i = a-y_{s_{t-1}}-2$ for $s_{t-1} \leq i \leq k-1$.                                              
\end{proof}

In the next proposition we prove the existence of nodal curves on $M=\mathbb{P}^1 \times \mathbb{P}^1$ having a base point free pencil $g^1_k$ cut out by the pencil of lines $\vert (0,1) \vert$ on $M$ and such that all nodes are contained in the union of at most $k-1$ lines of type $(1,0)$ on $M$.
In \cite{ref1} this is proved in Lemma 1 and its proof is the main part of that paper.
It is rather involved and it makes intensive use of the method d'Horace.
As already mentioned at the introduction we give a proof only using Lemma \ref{lemma3} and theorems of Bertini.

\begin{proposition}\label{proposition1}
On $M=\mathbb {P}^1 \times \mathbb{P}^1$ let $L_1, ... , L_{k-1}$ be different lines of type $(1,0)$ and for each $1 \leq i \leq k-1$ choose an effective reduced divisor $D_i$ of degree $y_i$ on $L_i$ with $y_1 \geq ... \geq y_{k-1} \geq 0$.
Assume those divisors are taken such that there is no line of type $(0,1)$ containing at least two points of $D_1 + ... + D_{k-1}$.
Let $a$ be an integer with $a \geq (k-1)y_1$ in case $Kar (K)=0$ and $a \geq (k-1)y_1+1$ in case $\Kar (K) =p>0$.
There exists an irreducible nodal curve $\Gamma \in \vert (k,a) \vert$ having $D_1 + ... + D_{k-1}$ as the set of nodes.
\end{proposition}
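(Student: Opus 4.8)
The plan is to produce $\Gamma$ as a general member of the linear system of type $(k,a)$ curves that are forced to be singular at the assigned points, and then to read off its geometry from Lemma~\ref{lemma3} together with theorems of Bertini. Write $Z = D_1 + \cdots + D_{k-1}$ for the prescribed node set, put $m = y_1 + \cdots + y_{k-1} = \deg Z$, and let $\Lambda \subseteq \vert (k,a) \vert$ be the sublinear system of curves having multiplicity at least $2$ at every point of $Z$. I would show that a general member of $\Lambda$ is irreducible, is smooth on $M \setminus Z$, and has an ordinary node at each point of $Z$; any such member is the desired $\Gamma$.

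First I would compute $\dim \Lambda$ using only Lemma~\ref{lemma3}. The local key is that, in coordinates $(s,t)$ for which $L_i = \{ s = c_i \}$ and with $f$ a local equation of $\Gamma$, a curve of type $(k,a)$ has multiplicity at least $2$ at a point $P \in L_i$ exactly when $\Gamma \cap L_i \geq 2P$ (that is, $f\vert_{L_i}$ has a double zero at $P$) together with the single extra condition $\partial_s f(P) = 0$. Thus $\Lambda$ is cut out in two stages. The tangency conditions $\Gamma \cap L_i \geq 2D_i$ for all $i$ define the system $\vert (k,a) - 2D_1 - \cdots - 2D_{k-1} \vert$, to which Lemma~\ref{lemma3} applies with the divisors $2D_i$ of degrees $2y_i$ (the degree hypothesis provides $a \geq 2y_1$ when $k \geq 3$), giving dimension $ka + k + a - 2m$. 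Imposing the remaining $m$ conditions $\partial_s f(P) = 0$, which are independent because the points of $Z$ have pairwise distinct $t$-coordinates (this is exactly the assumption that no line of type $(0,1)$ meets $Z$ twice), yields $\dim \Lambda = ka + k + a - 3m$. Equivalently the $3m$ double-point conditions are independent, so the restriction maps from $\Lambda$ to the jets along $Z$ are surjective.

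Next I would run the Bertini arguments. Generic smoothness off $Z$: the degree hypothesis, via Lemma~\ref{lemma3}, shows that $\Lambda$ has no base points on $M \setminus Z$, so a general member is smooth there by Bertini. That each point of $Z$ is an honest node rather than a worse double point follows because a general member meets $L_i$ in exactly $2P$ at $P$ (so the $v^2$-coefficient of the tangent cone is nonzero), while a further one-parameter variation — again measured by Lemma~\ref{lemma3} — makes the $uv$-coefficient generic; hence the tangent cone is a pair of distinct lines, which is a node in every characteristic (in characteristic $2$ this is precisely the nonvanishing of the $uv$-coefficient). Finally, since $(k,a)$ is ample (as $k,a > 0$) and $\Lambda$ is large, the reducible members sweep out a proper subvariety — a dimension count over the finitely many splittings $(k_1,a_1) + (k_2,a_2) = (k,a)$ — so Bertini's irreducibility theorem gives an irreducible general member. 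Combining the three assertions, a general $\Gamma \in \Lambda$ is an irreducible nodal curve of type $(k,a)$, and the independence of the double-point conditions forces its node set to be exactly $Z$.

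The main obstacle is making the two Bertini assertions — generic smoothness off $Z$ and generic irreducibility — valid in positive characteristic, where the classical statements can fail. This is where the strengthened hypothesis $a \geq (k-1)y_1 + 1$ enters: the extra positivity makes $\Lambda$ very ample on $M \setminus Z$ and the associated morphism separable, restoring both conclusions. The only other point needing care is the independence of the conditions $\partial_s f(P) = 0$ on the tangency system, and this is handled cleanly by the distinct-$t$-coordinate assumption; once it is in place the entire dimension count rests solely on Lemma~\ref{lemma3}, with no recourse to the method d'Horace.
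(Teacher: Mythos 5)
Your overall strategy --- take a general member of the subsystem $\Lambda$ of curves of type $(k,a)$ singular along $Z=D_1+\cdots+D_{k-1}$ and verify irreducibility, smoothness off $Z$ and ordinary nodes on $Z$ by Bertini-type arguments --- is the same as the paper's, but the proposal omits the one device that makes all of those verifications actually go through: explicit members of $\Lambda$. The paper first produces, via Lemma~\ref{lemma3} and Bertini, a smooth curve $\gamma\in\vert (k-1,y_1)-D_1-\cdots-D_{k-1}\vert$ (type $(k-1,y_1+1)$ in characteristic $p$) meeting each $L_i$ transversally, and then forms the ``type $F_i$'' divisors $\gamma+L_i+(\text{lines of type }(0,1))\in\Lambda$; the hypothesis $a\geq (k-1)y_1$ (resp.\ $+1$) is what guarantees there are enough $(0,1)$-lines to spare for this construction. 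These explicit reducible members certify every open condition you need: that $\Lambda$ has no base points off $Z$ (exhibit an $F_j$ avoiding a given $P$), that a general member has \emph{ordinary} nodes at $Z$ (an $F_i$ is locally a transverse pair of branches at each point of $Z$, using precisely the assumption that no $(0,1)$-line meets $Z$ twice), that $\Lambda$ is neither composed with a pencil nor of the form $p^eU$, and, in characteristic $p$, the tangent-space hypotheses of the Bertini substitute (Theorem 1 of \cite{ref5}). In your write-up each of these points is asserted (``via Lemma~\ref{lemma3}'', ``the extra positivity makes $\Lambda$ very ample and the morphism separable'', ``a dimension count over the finitely many splittings'') but none is carried out, and none follows from Lemma~\ref{lemma3} alone: that lemma only computes systems cut out by divisorial conditions supported on lines of type $(1,0)$, and says nothing about double-point conditions or about base points away from the $L_i$.

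There are also two concrete problems in your dimension count. Applying Lemma~\ref{lemma3} to the divisors $2D_i$ requires $a\geq 2y_1$, which $a\geq(k-1)y_1$ does not give for $k=2$; and the independence of the $m$ extra conditions $\partial_s f(P)=0$ on the tangency system is exactly the interpolation statement that requires proof --- it is for statements of this kind that \cite{ref1} invokes the m\'ethode d'Horace --- and it does not follow from the distinct-$t$-coordinate hypothesis alone. Fortunately the exact dimension of $\Lambda$ is not needed for the Proposition; what is needed is the list of open conditions above, and the clean way to establish them, consistent with the paper's stated aim of using only Lemma~\ref{lemma3} and Bertini, is to exhibit the type $F_i$ divisors.
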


\begin{proof}
Let $\mathcal{L}$ be the linear system $\vert (k-1,y_1)-D_1 - ... - D_{k-1} \vert$ in case $\Kar (K)=0$ and $\vert (k-1,y_1+1) - D_1 - ... - D_{k-1} \vert$ in case $\Kar (K)=p>0$.
For $1 \leq i \leq k-1$ this linear system induces by restriction to $L_i$ a linear subsystem of $D_i + \vert \mathcal{O}_{L_i}(y_1 - y_i) \vert$ in case $\Kar (K)=0$ (in case $y_1=y_i$ this linear system has dimension 0) and of $D_i + \vert \mathcal{O}_{L_i}(y_1-y_i+1) \vert$ in case $\Kar (K)=p>0$.
In case $\Kar (K)=0$ applying Lemma \ref{lemma3} to the linear system $\vert (k-2,y_1) - D_1 - ... - \widehat{D_i} - ... - D_{k-1} \vert$ (hat means omitted) we obtain equality.
A similar statement holds in case $\Kar (K)=p>0$.
This shows a general element $\gamma$ of $\mathcal{L}$ intersects each line $L_i$ ($1 \leq i \leq k-1$) transversally (in particular $\gamma$ is smooth at each point of $D_i$) and $\mathcal{L}$ has no fixed point on $L_i$ outside of $D_i$.
In case $P \in L_i \setminus D_i$ and $\Kar (K)=p>0$ it also implies there exists $\gamma \in \mathcal {L}$ smooth at $P$ and such that $T_P({\gamma}) \neq L_i$.

Take any line $L$ of type $(1,0)$ different from each line $L_i$ ($1 \leq i \leq k-1$).
In case $\Kar (K)=0$ applying Lemma \ref{lemma3} to the linear system $\vert (k-2,y_1) -D_1 - ... - D_{k-1} \vert$ we find $\mathcal{L}$ induces a complete linear system by restriction to $L$ (and a similar statement is obtained in case $\Kar (K)=p>0$).
This implies $\mathcal{L}$ has no base point outside of $L_1 \cup ... \cup L_{k-1}$ and if $P \in M \setminus (L_1 \cup ... \cup L_{k-1})$ then there exist $\gamma \in \mathcal {L}$ smooth at $P$ such that if $P \in L \in \vert (1,0) \vert$ then $T_P (\gamma)\neq L$.

In case $\Kar (K)=0$, because $\mathcal {L}$ has no base points outside of $D_1 + ... + D_{k-1}$ and a general element $\gamma \in \mathcal{L}$ is smooth at $P$ belonging to $D_1 + ... + D_{k-1}$, it follows from Bertini's Theorem (see e.g. \cite{ref4}, Theorem 4.1) that a general element $\gamma \in \mathcal {L}$ is smooth.
In case $\Kar (K)=p>0$ we need to use a weaker form of Bertini's Theorem.
We are going to use Theorem 1 in \cite{ref5} and therefore we need some notation.

We use the quasi-projective variety $X=M \setminus \{ D_1 \cup ... \cup D_{k-1} \}$ and for $P \in X$ let $T_{P,\mathcal{L}}$ be the intersection of all tangent spaces $T_P(\gamma)$ with $\gamma \in \mathcal {L}$ containing $P$ (of course $\gamma$ is restricted to $X$).
Let $e_{\mathcal {L}}(P)=\dim (T_{P,\mathcal{L}})$ and for $0 \leq f \leq 2$ let $X_f= \{ P \in X : e_{\mathcal {L}}(P) \geq f \}$.
From Theorem 1 in \cite{ref5} we need to prove that for $0 \leq f \leq 2$ each subvariety $Z$ of $X_f$ satisfies $\dim (Z) \leq 2-f$.

For $P \in X \cap L_i$ for some $1 \leq i \leq k-1$ we know there exists $\gamma \in \mathcal {L}$ smooth at $P$ with $T_P (\gamma) \neq L_i$.
But $\gamma ' = L_1 \cup ... \cup L_{k-1} \cup (y_1 +1 \text { general lines of type } (0,1)) \in \mathcal {L}$ and $P \in \gamma '$ with $T_P(\gamma ')=L_i$.
This proves $T_{P,\mathcal {L}}= \{ 0 \}$, hence $P \in X_0$.
Since $e_{\mathcal {L}}(P)=0$ is an open condition for $P \in X$ it implies $\dim (X_1) \leq 1$.
We also proved that for $P \in X \setminus (L_1 \cup ... \cup L_{k-1})$ a general $\gamma \in \mathcal {L}$ containing $P$ is smooth at $P$, hence $X_2 = \emptyset$.
So Theorem 1 in \cite{ref5} implies a general element $\gamma \in \mathcal{L}$ is smooth on $X$, hence also on $M$.

Now we consider the linear system $\vert (k,a) \vert$.
A divisor in $\vert (k,a) \vert$ is called of type $F_i$ ($1 \leq i \leq k-1$) if it is the sum of $\gamma \in \mathcal {L}$, the line $L_i$, all lines of type $(0,1)$ through some point of $D_1 + ... + \widehat{D_i} + ... + D_{k-1}$ (called the fixed lines of type $(0,1)$) and $a-(y_1 + ... + \widehat{y_i} + ... + y_{k-1}+y_1)$ more lines of type $(0,1)$ (called the free lines of type $(0,1)$).
Each divisor of type $F_i$ is singular at each point of $D_1 + ... + D_{k-1}$.
Choosing $\gamma$ general and also choosing the free lines of type $(0,1)$ general we have a general divisor of type $F_i$ has  an ordinary node at each point of $D_i$.
This proves that a general $\gamma \in \vert (k,a) \vert$ singular at each point of $D_1 + ... + D_{k-1}$ has ordinary nodes at those points.
Let $\mathcal {L}_s$ be the linear subsystem of $\vert (k,a) \vert$ of curves singular at each point of $D_1 + ... + D_{k-1}$.
We are going to prove that $\mathcal {L}_s$ has no base points outside of $D_1 + ... + D_{k-1}$ and in case $\Kar (K)=p>0$ and $P \in L_i \setminus D_i$ for some $1 \leq i \leq k-1$ then there exists $\Gamma \in \mathcal {L}_s$ with $\Gamma$ smooth at $P$ and $T_P(\Gamma) \neq L_i$ and in case $P \notin L_1 \cup ... \cup L_{k-1}$ then we can take $\Gamma$ smooth at $P$.

Let $P \in L_i \setminus D_i$ for some $1 \leq i \leq k-1$.
We use a divisor of type $F_j$ for some $j \neq i$.
In case $P$ belongs to some line of type $(0,1)$  through some point $Q \in D_1 + ... + D_{k-1}$ then we take $j$ such that $Q \in D_j$, otherwise we choose $j\neq i$ arbitrary.
We use $\gamma \in \mathcal{L}$ such that $P \notin \gamma$ and we also take the free lines of type $(0,1)$ not containing $P$.
In case $\Kar (K)=p>0$ it is also possible to take $\gamma \in \mathcal {L}$ smooth at $P$ with $T_P(\gamma) \neq L_i$.
This proves the properties of $\mathcal {L}_s$ for points $P \in L_1 \cup ... \cup L_{k-1}$.

In case $P \notin L_1 \cup ... \cup L_{k-1}$ we use a divisor of type $F_i$ for some $1 \leq i \leq k-1$.
In case the line of type $(0,1)$ through $P$ contains a point $Q \in D_1 + ... + D_{k-1}$ we take $i$ such that $Q \in D_i$, otherwise we take $i$ arbitrarily.
We choose $\gamma \in \mathcal {L}$ such that $P \notin \gamma$ and we take the free lines of type $(0,1)$ not containing $P$.
In case $\Kar (K)=p>0$ we also take $\gamma \in \mathcal{L}$ smooth at $P$.
Again this proves the properties of $\mathcal{L}_s$ for $P \notin L_1 \cup ... \cup L_{k-1}$.

As in the case of $\mathcal {L}$ we can use Bertini's Theorem to show that a general element $\Gamma \in \mathcal {L}_s$ is smooth outside of $D_1 + ... + D_{k-1}$.

Finally we need to show that a general element $\Gamma$ of $\mathcal {L}_s$ is irreducible.
The divisors of type $F_i$ already show $\mathcal {L}_s$ does not have a fixed component.
If a general element of $\mathcal {L}_s$ would not be irreducible then in case $\Kar (K)=0$ another theorem of Bertini implies $\mathcal {L}_s$ is composed of a pencil (it need not be a linear pencil) (see e.g. \cite{ref4} Theorem 5.3).
The divisors of type $F_i$ ($1 \leq i \leq k-1$) show this is not the case.
In case $\Kar (K)=p>0$ one also has the possibility that the general element of $\mathcal {L}_s$ is of the form $p^eU$ for some $e\geq 1$ and $U$ an irreducible  divisor (see \cite{ref6}, Section 1).
In that case all divisors are of type $p^eU$ and the divisors of type $F_i$ show this is not the case.
\end{proof}

In order to make this paper complete we give the argument giving rise to the main theorem of \cite{ref1} obtaining a slightly better bound.
The arguments are those from \cite{ref1}, part (ii) of the proof of the Main Theorem.

\begin{theorem}\label{theorem2}
Let $ e \in \mathbb{Z}_{ \geq 1}$.
In case $\Kar (K)=0$ let $A(e)=(k-1)((k-1)e-2)$ and in case $\Kar (K)=p>0$ let $A(e)=(k-1)((k-1)e-1)$.
For each sequence $0 \leq e_1 \leq ... \leq e_{k-1} = e_1+e$ such that $g=k-1+e_1 + ... + e_{k-1} > A(e)$ there exists a smooth $k$-gonal curve $C$ of genus $g$ such that a linear system $g^1_k$ on $C$ has scrollar invariants $(e_1, ..., e_{k-1})$.
\end{theorem}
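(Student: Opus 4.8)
The plan is to read off the prescribed sequence directly as the scrollar invariants produced by Theorem~\ref{theorem1}, after choosing the auxiliary numerical data so that the existence hypothesis of Proposition~\ref{proposition1} is satisfied. Recall that Theorem~\ref{theorem1} produces the invariants $a-y_i-2$ from a nodal curve of type $(k,a)$ whose node set is $D_1+\dots+D_{k-1}$ with $D_i$ a reduced divisor of degree $y_i$ on a line $L_i$ of type $(1,0)$. So, given a target $0\le e_1\le\dots\le e_{k-1}=e_1+e$, I would set
\[
a=e_{k-1}+2,\qquad y_i=e_{k-1}-e_i\quad(1\le i\le k-1).
\]
Then $y_1=e\ge y_2\ge\dots\ge y_{k-1}=0$ and $a=e_1+e+2\ge y_1$, so the numerical hypotheses of Theorem~\ref{theorem1} hold; one checks $a-y_i-2=e_i$, and the genus of the normalization is $(k-1)(a-1)-\sum y_i=(k-1)+\sum e_i=g$, exactly as required. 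Thus the whole problem reduces to producing one such nodal curve.

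The only real point to verify is that Proposition~\ref{proposition1} applies, i.e.\ that $a\ge(k-1)y_1$ when $\Kar(K)=0$ and $a\ge(k-1)y_1+1$ when $\Kar(K)=p>0$; this is precisely where the genus bound $g>A(e)$ is used, and it is the crux of the argument. Since every $e_i\le e_{k-1}=e_1+e$, we have
\[
g=(k-1)+\sum_{i=1}^{k-1}e_i\le(k-1)(e_1+e+1).
\]
In characteristic $0$, combining this with $g>A(e)=(k-1)((k-1)e-2)$ yields $e_1+e+1>(k-1)e-2$, hence $e_1\ge(k-2)e-2$ by integrality, and therefore $a=e_1+e+2\ge(k-1)e=(k-1)y_1$. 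In characteristic $p$, the bound $g>A(e)=(k-1)((k-1)e-1)$ gives $e_1\ge(k-2)e-1$, so $a\ge(k-1)e+1=(k-1)y_1+1$. In both cases the hypothesis of Proposition~\ref{proposition1} holds, and the slightly different values of $A(e)$ reflect exactly the two forms of that hypothesis (the two bounds for $A(e)$ differ by $k-1$, matching the $+1$ in the Proposition after multiplication by $k-1$).

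To conclude, I would choose the $k-1$ lines $L_i$ and the reduced divisors $D_i$ of degree $y_i$ so that all node-points have pairwise distinct second coordinates; this makes the configuration avoid any line of type $(0,1)$ through two nodes, as Proposition~\ref{proposition1} demands, and is possible since the node set $D_1+\dots+D_{k-1}$ is finite. Proposition~\ref{proposition1} then furnishes an irreducible nodal $\Gamma\in\vert(k,a)\vert$ with node set exactly $D_1+\dots+D_{k-1}$, and Theorem~\ref{theorem1} shows its normalization $C$ is smooth of genus $g$ carrying the base-point-free $g^1_k$ cut by $\vert(0,1)\vert$ with scrollar invariants $(e_1,\dots,e_{k-1})$. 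The remaining point that this pencil realizes $C$ as a \emph{$k$-gonal} curve is routine: because $a$ is large relative to $k$, the induced $g^1_k$ is the gonality pencil. I do not expect any genuine obstacle beyond the numerical matching carried out above; once the choice $a=e_{k-1}+2$ is made, the construction is simply an application of the two preceding results.
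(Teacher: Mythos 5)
Your proposal is correct and follows essentially the same route as the paper: choose $a=e_{k-1}+2$ and $y_i=e_{k-1}-e_i$, verify the hypothesis $a\ge (k-1)y_1$ (resp.\ $a\ge (k-1)y_1+1$) of Proposition~\ref{proposition1} from the genus bound $g>A(e)$, and read off the scrollar invariants from Theorem~\ref{theorem1}; the paper phrases the numerical step contrapositively ($e_{k-1}\le A-3$ would force $g\le (k-1)(A-2)$) but the content is identical. Your arithmetic checks out, so there is nothing substantive to add.
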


\begin{proof}
Fix a sequence of integers $e=E_{k-1}\geq E_{k-2} \geq ... \geq E_2 \geq E_1=0$.
On $M = \mathbb{P}^1 \times \mathbb {P}^1$ choose $k-1$ different lines $L_1, ..., L_{k-1}$ of type $(1,0)$ and choose an effective reduced divisor $D_i$ on $L_i$ of degree $E_i$ such that no line of type $(0,1)$ contains more than one point of $D_1 \cup ... \cup D_{k-1}$.
In case $\Kar (K)=0$ let $A=(k-1)e$ and in case $\Kar (K)=p>0$ let $A=(k-1)e+1$.
From Proposition \ref{proposition1} we know that for $a \geq A$ there exists an irreducible nodal curve in $\vert (k,a) \vert$ having its nodes exactly at $D_1 \cup ... \cup D_{k-1}$.
Let $C$ be its normalization and let $g^1_k$ be the linear system on $C$ induced by the pencil $\vert (0,1) \vert$ on $M$.
From Theorem \ref{theorem1} we know the scrollar invariants of $g^1_k$ are $a-2-e, a-2-E_{k-2}, ..., a-2-E_2, a-2$.

In this way we obtain all scrollar invariants $e_1, e_2, ... , e_{k-1}$ for a $g^1_k$ satisfying $e_{k-1} - e_{k-i} = E_i$ for $2 \leq i \leq k-1$ as soon as $e_{k-1} \geq A-2$.
Varying $E_{k-2}, ... , E_2$ we obtain all scrollar invariants for a $g^1_k$ satisfying $e_{k-1}-e_1=e$ as soon as $e_{k-1} \geq A-2$.

Now take $g > (k-1)(A-2)$ and $ e_1 \leq ... \leq e_{k-1}$ with $e_1 + ... + e_{k-1}=g-k+1$ and $e_{k-1}-e_1 = e$.
In case $e_{k-1} \leq A-3$ we would obtain $g \leq (k-1)(A-2)$, a contradiction.
Hence $e_{k-1} \geq A-2$ and we found the existence of a smooth curve $C$ of genus $g$ having a $g^1_k$ with scrollar invariants $e_1 , ... , e_{k-1}$.
\end{proof}

\begin{bibsection}
\begin{biblist}

\bib{ref1}{article}{
	author={E. Ballico},
	title={scrollar invariants of smooth projective curves},
	journal={Journal of pure and applied Algebra},
	volume={166},
	year={2002},
	pages={239-246},
}
\bib{ref5}{article}{
	author={M. Coppens},
	title={Smooth hypersurfaces containing a given closed subscheme}
	journal={Comm. in Algebra},
	volume={22},
	year={1994},
	pages={5299-5311},
}
\bib{ref3}{article}{
	author={M. Coppens},
	title={The uniqueness of Weierstrass points with semigroup $(a,b)$ and related semigroups},
	journal={Abh. Math. Sem. Univ. Hamburg},
	volume={89},
	year={2019},
	pages={1-16},
}
\bib{ref2}{book}{
	author={R. Hartshorne},
	title={Algebraic Geometry},
	series={Graduate Texts in Mathematics},
	volume={52},
	year={1977},
	publisher={Springer-Verlag},
}
\bib{ref4}{article}{
	author={S. Kleiman},
	title={Bertini and his two fundamental theorems},
	journal={Rend. Circ. Mat. Palermo},
	volume={55},
	year={1997},
	pages={9-37},	
}

\bib{ref6}{article}{
	author={O. Zariski},
	title={Introduction to the problem of minimal models in the theoryy of algebraic surfaces},
	journal={Publ. Math. Soc. Japan},
	volume={4},
	year={1958},
	pages={277-369},
}

\end{biblist}
\end{bibsection}

\end{document}